\title{Energy-Based In-Domain Control of a Piezo-Actuated Euler-Bernoulli Beam\thanks{This work has been supported by the Austrian Science Fund (FWF) under
grant number P 29964-N32.}} 
\author{T. Malzer\thanks{Institute of Automatic Control and Control Systems Technology,
Johannes Kepler University Linz, Altenbergerstrasse 66, 4040 Linz,
Austria (e-mail: \{tobias.malzer\_1, markus.schoeberl\}@jku.at)}, H. Rams\thanks{B\&R Industrial Automation GmbH, B\&R Stra\ss{}e 1, 5142 Eggelsberg, Austria (e-mail: hubert.rams@br-automation.com).}, M. Sch\"{o}berl
\footnotemark[2]
%
}
\date{}
 \pgfplotsset{compat=newest}
\definecolor{P285U}{cmyk}{0.89,0.43,0.0,0.0}
\definecolor{P285U_font}{cmyk}{0.89,0.43,0.0,0.4}
\definecolor{lgray}{cmyk}{0,0,0,0.2}
\definecolor{myblue}{cmyk}{100,75,0,0}
\definecolor{jkuBlue}{RGB}{4,110,152}
\definecolor{jkuBlue}{RGB}{0,120,170}
\definecolor{jkuCyan}{RGB}{100,180,190}
\definecolor{jkuYellow}{RGB}{230,195,35}
\definecolor{jkuGrey}{RGB}{125,130,140}
\definecolor{jkuDarkGrey}{RGB}{51,51,51}
\definecolor{jkuLightGreen}{RGB}{195,215,75}
\definecolor{jkuGreen}{RGB}{115,180,85}
\definecolor{jkuPurple}{RGB}{145,75,130}
\definecolor{jkuRed}{RGB}{205,90,80}
\newtheorem{thm}{Theorem}
\newtheorem{prop}[thm]{Proposition}
\theoremstyle{definition}
\newtheorem{exmp}{Example}
\newtheorem{rem}{Remark}
\begin{document}
\maketitle \thispagestyle{empty} \pagestyle{empty}

\begin{abstract}
The main contribution of this paper is the extension of the well-known boundary-control
strategy based on structural invariants to the control of infinite-dimensional systems
with in-domain actuation. The systems under consideration, governed by partial differential
equations, are described in a port-Hamiltonian setting making heavy use of the underlying
jet-bundle structure, where we restrict ourselves to systems with 1-dimensional spatial domain
and 2nd-order Hamiltonian. To show the applicability of the proposed approach, we develop a
dynamic controller for an Euler-Bernoulli beam actuated with a pair of piezoelectric patches
and conclude the article with simulation results.
\end{abstract}

\section{Introduction}

For the description of finite-dimensional systems, due to the illustration
of the underlying physical effects, the port-Hamiltonian (pH) system
representation has proven to be an adequate framework. A major advantage
is the close relation between the total energy of the system and the
corresponding evolution equations, providing an insight into the energy
flows within the system and with the system environment. In particular
the fact that so-called energy-ports appear, makes the pH-system representation
interesting for control-engineering applications, see \cite{Schaft2000,Ortega2001}
for instance. The basic idea of such appropriate control strategies
is to design a closed-loop system exhibiting a desired behaviour.
Therefore, the objective is to shape the total energy of the system
and increase the dissipation rate, or even modify the structure of
the system dynamics.

Some of these well-known control strategies, like the so-called energy-Casimir
method, have already been extended to systems governed by partial
differential equations (PDEs), where for the most parts the occurrence
of boundary-energy ports is exploited. It must be mentioned that the
pH-system representation in the infinite-dimensional scenario is not
unique, see \cite{Schoeberl2013b} for a comparison of the Stokes-Dirac
approach and an approach based on jet-bundle structures by means of
a mechanical example. As a consequence, the generation of the boundary
ports strongly depends on the chosen approach. In \cite{Schaft2002,Macchelli2004a,Gorrec2005},
the boundary-energy flow for infinite-dimensional systems formulated
within the Stokes-Dirac scenario is investigated, while in \cite{Ennsbrunner2005,Schoeberl2014a,Malzer2018}
the generation of boundary ports for the jet-bundle approach is discussed.
With regard to control by interconnection based on structural invariants
(Casimir functionals), these boundary ports shall be used to couple
the infinite-dimensional plant to a dynamic controller. This control
strategy provides the opportunity to inject additional damping into
the closed loop, and moreover, the mentioned Casimir functionals are
used to relate (some) of the controller states to the plant in order
to shape the energy of the system, see, e.g., \cite{Macchelli2004b,Macchelli2004}
for the controller design in the Stokes-Dirac framework and \cite{Siuka2011a,Schoeberl2013a,Rams2017a}
for the jet-bundle approach. From a mathematical point of view, an
essential feature is that a system governed by PDEs is coupled at
(a part of) the boundary with a system that is described by ordinary
differential equations (ODEs).

It is worth stressing that the application of boundary-control schemes
constitutes a restriction with regard to the energy flows. Furthermore,
there are many actuators, like, e.g., piezoelectric actuators, which
do not operate on the boundary but within the spatial domain. In view
of these facts, it seems natural to extend the known boundary-control
schemes to the control of infinite-dimensional systems with in-domain
actuation. There, a distinction needs to be drawn. On the one hand,
we consider systems with lumped inputs that may act on a part of the
spatial domain. For this system class, the interconnection of the
system with a dynamic controller corresponds to a coupling of a PDE
and an ODE-system within the spatial domain of the infinite-dimensional
system. On the other hand, the scenario of systems with distributed
input densities would call for infinite-dimensional controllers, which
represents the coupling of a PDE with a PDE. For stability investigations
of infinite-dimensional systems, usually functional-analytic methods
are used, see e.g. \cite{Jacob2012}. This framework also allows to
investigate the well-posedness of a problem. However, in this contribution
we assume well-posedness and confine ourselves on energy considerations.
Thus, no detailed stability investigations are carried out.

To demonstrate the proposed control strategy, we consider pH-systems
with 2nd-order Hamiltonian on 1-dimensional spatial domains. For such
systems, in \cite{Rams2017a} a dynamic boundary controller has been
derived. Now, we intend to develop an in-domain control strategy for
this system class. Therefore, the main contributions of this paper
are as follows. First, we state a proper pH-system representation
of a piezo-actuated Euler-Bernoulli beam in Section 3. Furthermore,
we derive an appropriate control methodology for infinite-dimensional
pH-systems with lumped inputs that may act on a part of the spatial
domain and show the capability of the approach by means of simulation
results for the piezo-actuated Euler-Bernoulli beam, see Section 4.

\section{Notation and Preliminaries}

Throughout this paper, we make heavy use of differential-geometric
methods, where the notation is similar to that of \cite{Giachetta1997}.
Formulas are kept short and readable by applying tensor notation and
using Einsteins convention on sums. However, the ranges of the used
indices are not indicated when they are clear from the context. We
use the standard symbols $\wedge$, $\rfloor$ and $\mathrm{d}$ denoting
the exterior (wedge) product, the natural contraction between tensor
fields and the exterior derivative, respectively. To avoid exaggerated
notation, the use of pull-back bundles is omitted. Furthermore, the
expression $C^{\infty}(\mathcal{M})$ denotes the set of all smooth
functions on a manifold $\mathcal{M}$.

In this contribution, we investigate systems governed by PDEs in a
pH-setting. Therefore, we introduce some geometrical structures and
begin with defining a so-called bundle $\pi:\mathcal{E}\rightarrow\mathcal{B}$,
which allows a clear distinction between dependent and independent
coordinates. Here, the base manifold $\mathcal{B}$ is equipped with
the independent coordinate $z^{1}$ as we confine ourselves to 1-dimensional
spatial domains. Consequently, the boundary $\partial\mathcal{B}$
is zero-dimensional and the restriction of a mathematical expression
to $\partial\mathcal{B}$ is indicated with $(\cdot)|_{\partial\mathcal{B}}$.
As the total manifold $\mathcal{E}$ comprises the dependent coordinates
$x^{\alpha}$, with $\alpha=1,\ldots,n$, as well, it is equipped
with $(z^{1},x^{\alpha})$. Moreover, $\pi$ is a surjective submersion
from the total manifold $\mathcal{E}$ to the base manifold $\mathcal{B}$
and is called projection. Next, we consider (higher-order) jet manifolds
to be able to introduce derivative coordinates (jet variables). For
instance, the 4th jet manifold $\mathcal{J}^{4}(\mathcal{E})$ possesses
the coordinates $(z^{1},x^{\alpha},x_{1}^{\alpha},x_{11}^{\alpha},x_{111}^{\alpha},x_{1111}^{\alpha})$,
where, exemplarily, $x_{11}^{\alpha}$ denotes the 2nd-order derivative
coordinate, i.e. the 2nd derivative of $x^{\alpha}$ with respect
to the independent coordinate $z^{1}$. 

Furthermore, we introduce the so-called tangent bundle $\tau_{\mathcal{E}}:\mathcal{T}(\mathcal{E})\rightarrow\mathcal{E}$
equipped with the coordinates $(z^{1},x^{\alpha},\dot{z}^{1},\dot{x}^{\alpha})$,
where the abbreviations $\partial_{1}=\partial/\partial z^{1}$ and
$\partial_{\alpha}=\partial/\partial x^{\alpha}$ denote the fibre
bases of the bundle. An important subbundle of $\tau_{\mathcal{E}}$
is the vertical tangent bundle $\nu_{\mathcal{E}}:\mathcal{V}(\mathcal{E})\rightarrow\mathcal{E}$,
which possesses the coordinates $(z^{1},x^{\alpha},\dot{x}^{\alpha})$.
Since the relation $\dot{z}^{1}=0$ holds, the vertical vector field
$v=v^{\alpha}\partial_{\alpha}$ is tangent to the fibres of $\mathcal{E}$.
Furthermore, the 2nd prolongation of a vertical vector field $v$
is given by $j^{2}\left(v\right)=v^{\alpha}\partial_{\alpha}+d_{1}(v^{\alpha})\partial_{\alpha}^{1}+d_{1}(d_{1}(v^{\alpha}))\partial_{\alpha}^{11}$
and makes use of the total derivative $d_{1}=\partial_{1}+x_{1}^{\alpha}\partial_{\alpha}+x_{11}^{\alpha}\partial_{\alpha}^{1}+x_{111}^{\alpha}\partial_{\alpha}^{11}+x_{1111}^{\alpha}\partial_{\alpha}^{111}+\ldots$
together with the abbreviations $\partial_{\alpha}^{1}=\partial/\partial x_{1}^{\alpha}$
and $\partial_{\alpha}^{1\ldots1}=\partial/\partial x_{1\ldots1}^{\alpha}$.

Further important bundles are the cotangent bundles $\tau_{\mathcal{B}}^{*}=\mathcal{T}^{*}\left(\mathcal{B}\right)\rightarrow\mathcal{B}$
and $\tau_{\mathcal{E}}^{*}=\mathcal{T}^{*}\left(\mathcal{E}\right)\rightarrow\mathcal{E}$
possessing the coordinates $(z^{1},\dot{z}_{1})$ and $(z^{1},x^{\alpha},\dot{z}_{1},\dot{x}_{\alpha})$,
respectively, where the holonomic bases are denoted by $\mathrm{d}z^{1},\mathrm{d}x^{\alpha}$.
These bundles allow to locally define one-forms according to $\varpi=\varpi_{1}\mathrm{d}z^{1}$
and $\omega=\omega_{1}\mathrm{d}z^{1}+\omega_{\alpha}\mathrm{d}x^{\alpha}$,
with $\varpi_{1}\in C^{\infty}(\mathcal{B})$ and $\omega_{1},\omega_{\alpha}\in C^{\infty}(\mathcal{E})$.
In what follows, we are interested in one-forms with coefficients
depending on derivative variables. More precisely, we focus on (Hamiltonian)
densities $\mathfrak{H}=\mathcal{H}\Omega$ with $\mathcal{H}\in C^{\infty}(\mathcal{J}^{2}(\mathcal{E}))$,
i.e. on densities that may depend on 2nd-order derivative coordinates.
Here, $\Omega=\mathrm{d}z^{1}$ denotes the volume element on $\mathcal{B}$
and $\Omega_{1}=\partial_{1}\rfloor\mathrm{d}z^{1}$ the boundary-volume
form. The integrated quantity of $\mathfrak{H}$, which is given by
$\mathscr{H}=\int_{\mathcal{B}}\mathcal{H}\Omega$, is called the
Hamiltonian functional. The bundle structure $\pi:\mathcal{E}\rightarrow\mathcal{B}$
allows to construct some further geometric objects like the tensor
bundle $\mathcal{W}_{1}^{r}(\mathcal{E})=\mathcal{T}^{*}(\mathcal{E})\wedge\mathcal{T}^{*}(\mathcal{B})$
with a typical element $\omega_{\alpha}\mathrm{d}x^{\alpha}\wedge\mathrm{d}z^{1}$
for $\mathcal{W}_{1}^{r}(\mathcal{E})$, where $\omega_{\alpha}\in C^{\infty}(\mathcal{J}^{r}(\mathcal{E}))$
is met.

\section{Infinite-Dimensional PH-Systems}

This section deals with the pH-system representation based on jet-bundle
structures for systems with 1-dimensional spatial domain and 2nd-order
Hamiltonian, see, e.g. \cite{Rams2017a}. The framework has its origin
in \cite{Ennsbrunner2005,Schoeberl2008a}, and is mainly based on
a certain power-balance relation, which allows us to introduce (power)
ports distributed over the domain as well as on the boundary. In this
paper, we focus on systems with in-domain actuation and hence, as
a classical example we derive a proper pH-system representation of
a piezo-actuated Euler-Bernoulli beam.

Let $\mathfrak{H}$ be a 2nd-order Hamiltonian, i.e. $\mathcal{H}\in C^{\infty}(\mathcal{J}^{2}(\mathcal{E}))$,
then a pH-system formulation including in- and outputs on the domain
is given by
\begin{align}
\dot{x} & =(\mathcal{J}-\mathcal{R})(\delta\mathfrak{H})+u\rfloor\mathcal{G},\label{eq:pH_system_general}\\
y & =\mathcal{G}^{*}\rfloor\delta\mathfrak{H},\nonumber 
\end{align}
together with appropriate boundary conditions.\begin{rem}It should
be noted that in \cite{Schoeberl2011,Schoeberl2013a,Rams2017a}, only
systems with boundary in- and outputs have been investigated. In this
contribution, we focus our interests on systems with in-domain actuation,
and, therefore we include the term $u\rfloor\mathcal{G}$ in our system
representation.\end{rem}In (\ref{eq:pH_system_general}), the interconnection
map $\mathcal{J}$, which describes the internal power flow, and the
dissipation map $\mathcal{R}$ take the form of $\mathcal{J},\mathcal{R}:\mathcal{T}^{*}(\mathcal{E})\wedge\mathcal{T}^{*}(\mathcal{B})\rightarrow\mathcal{V}(\mathcal{E})$.
Furthermore, $\mathcal{J}$ is skew-symmetric, i.e. the coefficients
meet $\mathcal{J}^{\alpha\beta}=-\mathcal{J}^{\beta\alpha}\in C^{\infty}(\mathcal{J}^{4}(\mathcal{E}))$,
and $\mathcal{R}$ is symmetric and positive semidefinite, implying
$\mathcal{R}^{\alpha\beta}=\mathcal{R}^{\beta\alpha}\in C^{\infty}(\mathcal{J}^{4}(\mathcal{E}))$
and $\left[\mathcal{R}^{\alpha\beta}\right]\geq0$ for the coefficient
matrix. For 2nd-order Hamiltonian densities, the variational derivative
corresponds to $\delta\mathfrak{H}=\delta_{\alpha}\mathcal{H}\mathrm{d}x^{\alpha}\wedge\Omega$
with $\delta_{\alpha}(\cdot)=\partial_{\alpha}(\cdot)-d_{1}(\partial_{\alpha}^{1}(\cdot))+d_{11}(\partial_{\alpha}^{11}(\cdot))$.
Due to the fact that we intend to develop in-domain control strategies
in this paper, the terms including the external inputs and collocated
outputs  in (\ref{eq:pH_system_general}) are of particular interest.
It is worth stressing that both, the coefficients $\mathcal{G}_{\xi}^{\alpha}$
of the input map $\mathcal{G}:\mathcal{U}\rightarrow\mathcal{V}(\mathcal{X})$
as well as the input coordinates $u^{\xi}\in\mathcal{U}$, may depend
(amongst others) on the spatial variable $z^{1}$. Based on the duality
of the input- and the output-bundle, see \cite[Section IV]{Ennsbrunner2005},
we are able to deduce the important relation 
\begin{equation}
(u\rfloor\mathcal{G})\delta\mathfrak{H}=u\rfloor(\mathcal{G}^{*}\rfloor\delta\mathfrak{H})=u\rfloor y.\label{eq:distributed_collocation}
\end{equation}
With regard to the control-engineering purposes of the following section,
it is of particular interest how the Hamiltonian functional $\mathscr{H}$
evolves along solutions of the system (\ref{eq:pH_system_general})
(well-posedness provided). If $\mathscr{H}$ corresponds to the total
energy of the system, the formal change, which can be given as
\begin{equation}
\dot{\mathscr{H}}=-\int_{\mathcal{B}}\mathcal{R}(\delta\mathfrak{H})\rfloor\delta\mathfrak{H}+\int_{\mathcal{B}}u\rfloor y+(\dot{x}\rfloor\delta^{\partial,1}\mathfrak{H}+\dot{x}_{1}\rfloor\delta^{\partial,2}\mathfrak{H})|_{\partial\mathcal{B}}\label{eq:h_p_general}
\end{equation}
by means of (\ref{eq:distributed_collocation}), states a power-balance
relation and comprises dissipation and collocation on the domain.
The collocation term $\int_{\mathcal{B}}u\rfloor y$ can be used to
define power ports distributed over the spatial domain allowing for
a non-zero power flow. Furthermore, (\ref{eq:h_p_general}) enables
us to introduce boundary-power ports, where we basically exploit the
boundary operators $\delta^{\partial,1}\mathfrak{H}=(\partial_{\alpha}^{1}\mathcal{H}-d_{1}(\partial_{\alpha}^{11}\mathcal{H}))\mathrm{d}x^{\alpha}\wedge\Omega_{1}$
and $\delta^{\partial,2}\mathfrak{H}=\partial_{\alpha}^{11}\mathcal{H}\mathrm{d}x_{1}^{\alpha}\wedge\Omega_{1}$,
locally given as
\begin{equation}
\delta_{\alpha}^{\partial,1}\mathcal{H}=\partial_{\alpha}^{1}\mathcal{H}-d_{1}(\partial_{\alpha}^{11}\mathcal{H}),\quad\delta_{\alpha}^{\partial,2}\mathcal{H}=\partial_{\alpha}^{11}\mathcal{H}.\label{eq:boundary_operators}
\end{equation}
Worth stressing is the fact that for the system class under investigation
(1-dimensional spatial domain and 2nd-order Hamiltonian), the formal
change (\ref{eq:h_p_general}) can be determined by integration by
parts. However, for pH-systems with 2nd-order Hamiltonian and higher-dimensional
spatial domain, the calculation of $\dot{\mathscr{H}}$ is a non-trivial
task, which is treated in \cite{Schoeberl2018}.

A local system representation of (\ref{eq:pH_system_general}) can
be given as
\begin{align}
\dot{x}^{\alpha} & =(\mathcal{J}^{\alpha\beta}-\mathcal{R}^{\alpha\beta})\delta_{\beta}\mathcal{H}+\mathcal{G}_{\xi}^{\alpha}u^{\xi},\label{eq:pH_non_differential_operator}\\
y_{\xi} & =\mathcal{G}_{\xi}^{\alpha}\delta_{\alpha}\mathcal{H},\nonumber 
\end{align}
with $\alpha,\beta=1,\ldots,n$ and $\xi=1,\ldots,m$. Henceforth,
as we focus on systems actuated solely within the spatial domain,
we suppose that no power exchange takes place through the boundary
$\partial\mathcal{B}=\{0,L\}$, i.e. $(\dot{x}^{\alpha}\delta_{\alpha}^{\partial,1}\mathcal{H})|_{\partial\mathcal{B}}=0$
as well as $(\dot{x}_{1}^{\alpha}\delta_{\alpha}^{\partial,2}\mathcal{H})|_{\partial\mathcal{B}}=0$.
Consequently, the power-balance relation (\ref{eq:h_p_general}) follows
to
\begin{equation}
\dot{\mathscr{H}}=-\int_{\mathcal{B}}\delta_{\alpha}(\mathcal{H})\mathcal{R}^{\alpha\beta}\delta_{\beta}(\mathcal{H})\mathrm{d}z^{1}+\int_{\mathcal{B}}u^{\xi}y_{\xi}\mathrm{d}z^{1}\label{eq:h_p_general_local}
\end{equation}
in local coordinates.

As an example, an Euler-Bernoulli beam actuated by one pair of piezoelectric
macro-fibre composite (MFC) patches is studied. To this end, we summarise
the derivation of the equation of motion for the transversal deflection
$w$ of the beam, which is given in detail in \cite{Schroeck2011}.
Furthermore, we aim to find a pH-system representation being suitable
for the control strategy presented in Section 4.

\begin{exmp}[Piezo-actuated Euler-Bernoulli beam]\label{ex:Euler-Bernoulli_Beam_Piezo_actuated}We
consider an Euler-Bernoulli beam actuated by one pair of piezoelectric
patches with two symmetrically placed actuators on the upper and lower
side of the beam. Furthermore, we assume that the beam is clamped
at the position $z^{1}=0$, i.e. $w(t,0)=0$, $\dot{w}(t,0)=0$ and
$w_{1}(t,0)=0$, while the other end $z^{1}=L$ is free, implying
that the shear force and the bending moment vanish.

First, we derive the equation of motion in a Lagrangian framework
by exploiting the calculus of variations, see \cite{Meirovitch1997}
for instance. In this setting, the time $t$ and the spatial coordinate
$z^{1}$ are used as independent variables. To begin with, we state
the energy densities of the system under investigation, where we first
focus on the part of the energy which is due to the beam. If we use
linear constitutive and linearised geometric relations according to
the Euler-Bernoulli hypothesis, the potential-energy density of the
beam can be given as
\[
\mathcal{V}_{b}=\tfrac{1}{2}EI(w_{11})^{2},
\]
with $E$ and $I$ denoting Young's modulus and the moment of inertia,
respectively. Moreover, the kinetic-energy density of the beam reads
as
\[
\mathcal{T}_{b}=\tfrac{1}{2}\rho_{b}A_{b}(w_{t})^{2},
\]
where $\rho_{b}$ is the mass density and $A_{b}$ the cross section
of the beam. With regard to the MFC patch pair, it is important to
mention that it is attached at a specified position. To mathematically
describe the position of the piezoelectric pair, we introduce the
spatial actuator characteristic
\begin{equation}
\Gamma(z^{1})=h(z^{1}-z_{p}^{1})-h(z^{1}-z_{p}^{1}-L_{p}),\label{eq:piezo_spatial_function}
\end{equation}
with $h(\cdot)$ denoting the Heaviside function and $z_{p}^{1}$
the position where the MFC patches of the length $L_{p}$ are attached
meeting $0<z_{p}^{1}<z_{p}^{1}+L_{p}<L$. Consequently, the kinetic-energy
density of the MFC patches follows to
\[
\mathcal{T}_{p}=\rho_{p}A_{p}\Gamma(z^{1})(w_{t})^{2},
\]
where the corresponding cross section and mass density are denoted
by $A_{p}$ and $\rho_{p}$, respectively. To keep the complexity
as low as possible, we assume the following simplifications regarding
the MFC patches. First, we suppose a perfect compensation of all actuator
nonlinearities as well as an uniaxial state of stress. Furthermore,
we describe the electric field between the electrodes by an exclusive
field component $E_{1}$, i.e. $E_{2}=E_{3}=0$, and additionally
neglect the self-generated electric field stemming from the direct
piezoelectric effect as it is irrelevant compared to $E_{1}$. Consequently,
if we use linear constitutive relations for the MFC patches, see \cite[equ. (7)]{Schroeck2011},
and take the preceding assumptions into account, the potential-energy
density follows to
\[
\mathcal{V}_{p}=\Gamma(z^{1})(\Theta_{p}(w_{11})^{2}+2\Delta_{p}w_{11}u_{in}),
\]
where the (constant) material parameters of the MFC patches are hidden
in the abbreviations $\Theta_{p}$ and $\Delta_{p}$. Furthermore,
$u_{in}$ denotes the input voltage of the piezoelectric actuators.
To derive the equation of motion together with the boundary conditions,
we use the 2nd-order Lagrangian density
\begin{equation}
\mathcal{L}=\mathcal{T}_{b}+\mathcal{T}_{p}-\mathcal{V}_{b}-\mathcal{V}_{p},\label{eq:Langrangian_density}
\end{equation}
which can be given as the difference of the total-kinetic energy
density
\begin{equation}
\mathcal{T}=\tfrac{1}{2}\kappa(z^{1})(w_{t})^{2}\label{eq:total_kinetic_energy_density}
\end{equation}
and the total-potential energy density
\begin{equation}
\mathcal{V}=\tfrac{1}{2}\Theta(z^{1})(w_{11})^{2}+2\Gamma(z^{1})\Delta_{p}w_{11}u_{in}\label{eq:total_potential_energy_density}
\end{equation}
by means of the spatially varying parameters $\kappa(z^{1})=\rho_{b}A_{b}+2\rho_{p}A_{p}\Gamma(z^{1})$
and $\Theta(z^{1})=EI+2\Theta_{p}\Gamma(z^{1})$. For the system under
consideration -- 1st-order derivative variable with respect to $t$
and (solely) 2nd-order derivative variables with respect to $z^{1}$
--, the Euler-Lagrange operator corresponds to
\begin{equation}
\delta_{w}(\cdot)\!=\!\partial_{w}(\cdot)-d_{t}(\partial_{w}^{t}(\cdot))+d_{11}(\partial_{w}^{11}(\cdot)),\label{eq:Euler_Lagrange_Domain}
\end{equation}
and the boundary operators are\begin{subequations}\label{eq:Euler_Lagrange_Boundary}
\begin{align}
\delta_{w}^{\partial,1}(\cdot) & =\partial_{w}^{1}(\cdot)-d_{1}(\partial_{w}^{11}(\cdot)),\\
\delta_{w}^{\partial,2}(\cdot) & =\partial_{w}^{11}(\cdot),
\end{align}
\end{subequations}as in mechanics it is common to allow for no variation
on the time boundary. If we apply the domain operator (\ref{eq:Euler_Lagrange_Domain})
and the boundary operators (\ref{eq:Euler_Lagrange_Boundary}) to
the Lagrangian density (\ref{eq:Langrangian_density}), due to the
requirements $\delta_{w}\mathcal{L}=0$ as well as $(\dot{w}\delta_{w}^{\partial,1}\mathcal{L})|_{\partial\mathcal{B}}=0$,
$(\dot{w}_{1}\delta_{w}^{\partial,2}\mathcal{L})|_{\partial\mathcal{B}}=0$,
we obtain the equation of motion
\begin{equation}
\kappa(z^{1})w_{tt}=-\Theta(z^{1})w_{1111}-4\Theta_{p}\partial_{1}(\Gamma(z^{1}))w_{111}-2\Theta_{p}\partial_{11}(\Gamma(z^{1}))w_{11}-2\Delta_{p}\partial_{11}(\Gamma(z^{1}))u_{in},\label{eq:PDE_beam_piezo}
\end{equation}
together with the boundary conditions\begin{subequations}\label{eq:boundary_cond_Piezo}
\begin{align}
(\dot{w}EIw_{111})|_{\partial\mathcal{B}} & =0,\\
(\dot{w}_{1}EIw_{11})|_{\partial\mathcal{B}} & =0.
\end{align}
\end{subequations}It should be noted that the spatial derivatives
of $\Gamma(z^{1})$ occuring in (\ref{eq:PDE_beam_piezo}) would cause
some problems regarding the formulation of the equation of motion.
To avoid this problem, we approximate the discontinuous characteristic
(\ref{eq:piezo_spatial_function}) by the spatially differentiable
function
\[
\Gamma(z^{1})=\tfrac{1}{2}\tanh(\sigma(z^{1}-z_{p}^{1}))-\tfrac{1}{2}\tanh(\sigma(z^{1}-z_{p}^{1}-L_{p}))
\]
with the scaling factor $\sigma\in\mathbb{R}_{+}$.

Next, we are interested in a proper pH-system representation for the
system under investigation. It should be noted that in the pH-setting
the time $t$ plays the role of an evolution parameter, i.e. $t$
is no coordinate any more and, hence, the exclusive independent variable
is the spatial coordinate $z^{1}$. Consequently, we consider the
bundle $\pi:\mathcal{E}\rightarrow\mathcal{B}$ with the independent
coordinate $(z^{1})$ for $\mathcal{B}$ and coordinates $(z^{1},w,p)$
for $\mathcal{E}$ in the following, where we have introduced the
generalised momenta $p=\kappa(z^{1})\dot{w}=\kappa(z^{1})w_{t}$.
In principle, the Hamiltonian density is chosen as the sum of potential-
and kinetic-energy density according to $\mathcal{H}=\mathcal{T}+\mathcal{V}$.
However, to obtain an appropriate pH-system formulation, we set 
\[
\mathcal{H}=\tfrac{1}{2\kappa(z^{1})}p^{2}+\tfrac{1}{2}\Theta(z^{1})(w_{11})^{2},
\]
where we intentionally omit the term $2\Delta_{p}\Gamma(z^{1})w_{11}u_{in}$
of (\ref{eq:total_potential_energy_density}), as by using the calculus
of variations it generates the input part in (\ref{eq:PDE_beam_piezo}),
which is hidden in $g(z^{1})=-2\Delta_{p}\partial_{11}(\Gamma(z^{1}))$
in the pH-system representation

\begin{align}
\left[\begin{array}{c}
\dot{w}\\
\dot{p}
\end{array}\right] & =\left[\begin{array}{cc}
0 & 1\\
-1 & 0
\end{array}\right]\left[\begin{array}{c}
\delta_{w}\mathcal{H}\\
\delta_{p}\mathcal{H}
\end{array}\right]+\left[\begin{array}{c}
0\\
g(z^{1})
\end{array}\right]u_{in},\nonumber \\
y & =\left[\begin{array}{cc}
0 & g(z^{1})\end{array}\right]\left[\begin{array}{c}
\delta_{w}\mathcal{H}\\
\delta_{p}\mathcal{H}
\end{array}\right]=g(z^{1})\dot{w}.\label{eq:Piezo_pH}
\end{align}
Moreover, we are able to deduce the formal change of the Hamiltonian
functional, which follows to
\begin{equation}
\dot{\mathscr{H}}=\int_{\mathcal{B}}g(z^{1})\dot{w}u_{in}\mathrm{d}z^{1}\label{eq:h_p_piezo_beam}
\end{equation}
as the boundary terms vanish due to the boundary conditions (\ref{eq:boundary_cond_Piezo}).
It is worth stressing that (\ref{eq:h_p_piezo_beam}) corresponds
to an electrical power-balance relation as the unit of the distributed
output density (\ref{eq:Piezo_pH}) is $\mathrm{\tfrac{A}{m}}$.\end{exmp}
Ex. (\ref{ex:Euler-Bernoulli_Beam_Piezo_actuated}) highlights that
external inputs together with the collocated outputs generate power
ports which may be distributed over (a part of) the spatial domain.
As distributed ports allow for a non-zero power flow over the domain,
we use them to couple an infinite-dimensional system to a dynamic
pH-controller in the following section. 

\section{In-Domain Control by means of Structural Invariants}

This section deals with the extension of the energy-Casimir method
to infinite-dimensional pH-systems with in-domain actuation. Here,
we confine ourselves to systems with lumped inputs that may act distributed
over a part of the spatial domain. This has the consequence that the
collocated outputs can be interpreted as distributed output densities.
However, a certain interconnection allows for the use of a finite-dimensional
dynamic controller. An advantage of the proposed control strategy
is the applicability to piezo-actuated beams, which is presented at
the end of this section. 

\subsection{Interconnection (Infinite-Finite)}

In the following, we aim at stabilising pH-systems of the form (\ref{eq:pH_non_differential_operator}),
where the lumped inputs $u^{\xi}$ may act distributed over a part
of the spatial domain due to the input-map components $\mathcal{G}_{\xi}^{\alpha}$,
cf. Ex. \ref{ex:Euler-Bernoulli_Beam_Piezo_actuated}. To this end,
we are interested in a power-conserving interconnection of the infinite-dimensional
plant (\ref{eq:pH_non_differential_operator}) and a finite-dimensional
pH-controller, given in local coordinates as
\begin{align}
\dot{x}_{c}^{\alpha_{c}} & =(J_{c}^{\alpha_{c}\beta_{c}}-R_{c}^{\alpha_{c}\beta_{c}})\partial_{\beta_{c}}H_{c}+G_{c,\xi}^{\alpha_{c}}u_{c}^{\xi},\nonumber \\
y_{c,\xi} & =G_{c,\xi}^{\alpha_{c}}\partial_{\alpha_{c}}H_{c},\label{eq:pH_controller_finite}
\end{align}
with $\alpha_{c},\beta_{c}=1,\ldots,n_{c}$ and $\xi=1,\ldots,m$.
There, one must take account of the fact that the outputs of (\ref{eq:pH_non_differential_operator})
are considered as distributed output densities. Thus, to enable a
coupling with the finite-dimensional controller, the output densities
must be integrated over $\mathcal{B}$ and therefore, we choose a
power-conserving interconnection of the form
\begin{equation}
u^{\xi}\int_{\mathcal{B}}y_{\xi}\mathrm{d}z^{1}+u_{c}^{\xi}y_{c,\xi}=0.\label{eq:PCIS_3}
\end{equation}
To obtain a power-conserving interconnection which meets (\ref{eq:PCIS_3}),
we couple the infinite-dimensional plant and the finite-dimensional
controller according to 
\begin{equation}
u_{c}^{\xi}=K^{\xi\eta}\int_{\mathcal{B}}y_{\eta}\mathrm{d}z^{1},\quad u^{\xi}=-K^{\xi\eta}y_{c,\eta},\label{eq:PCIS_3_coupling}
\end{equation}
with $K^{\xi\eta}$ denoting the components of an appropriate map
$K$. It is worth stressing that the closed-loop system, which is
a result of the interconnection (\ref{eq:PCIS_3_coupling}), still
possesses a pH-structure, with the closed-loop Hamiltonian $\mathscr{H}_{cl}=\int_{\mathcal{B}}\mathcal{H}\mathrm{d}z^{1}+H_{c}$.
As we consider systems where no power exchange takes place at the
boundary $\partial\mathcal{B}$, i.e. $(\dot{x}^{\alpha}\delta_{\alpha}^{\partial,1}\mathcal{H}+\dot{x}_{1}^{\alpha}\delta_{\alpha}^{\partial,2}\mathcal{H})|_{\partial\mathcal{B}}=0$
is valid, the formal change of the closed-loop Hamiltonian $\mathscr{H}_{cl}$
can be deduced to
\[
\dot{\mathscr{H}}_{cl}=-\int_{\mathcal{B}}\delta_{\alpha}(\mathcal{H})\mathcal{R}^{\alpha\beta}\delta_{\beta}(\mathcal{H})\mathrm{d}z^{1}-\partial_{\alpha_{c}}(H_{c})R_{c}^{\alpha_{c}\beta_{c}}\partial_{\beta_{c}}(H_{c})
\]
because of the coupling (\ref{eq:PCIS_3_coupling}).

Having defined the coupling of the plant (\ref{eq:pH_non_differential_operator})
and the controller (\ref{eq:pH_controller_finite}) by means of (\ref{eq:PCIS_3_coupling}),
we are interested in structural invariants of the closed-loop system
aiming at relating some of the controller states to the plant. By
means of these controller states, we partially shape $\mathscr{H}_{cl}$,
whereas the controller states that are not related to the plant shall
be used for the damping injection and thus for the purpose of stabilisation.\begin{rem}\label{rem:stability}At
this point, it should be mentioned again that stability investigations
for systems governed by PDEs usually require functional-analytic methods.
In this contribution, the focus is on a formal approach exploiting
geometric system properties and consequently, no detailed stability
investigations will be carried out. However, worth stressing is the
fact that $\mathscr{H}_{cl}>0$ and $\dot{\mathscr{H}}_{cl}\leq0$
can be used for stability investigations in the sense of Lyapunov.\end{rem}

\subsection{Structural Invariants}

Motivated by the form of the plant (distributed) and the controller
(lumped), we consider Casimir-functionals according to
\begin{equation}
\mathscr{C}^{\lambda}=x_{c}^{\lambda}+\int_{\mathcal{B}}\mathcal{C}^{\lambda}\mathrm{d}z^{1},\label{eq:Casimir_Functionals_Piezo}
\end{equation}
with $\lambda=1,\ldots,\bar{n}\leq n_{c}$. The functionals (\ref{eq:Casimir_Functionals_Piezo})
must be constant along solutions of the closed loop, i.e. $\dot{\mathscr{C}}^{\lambda}=0$
is valid independently of $\mathcal{H}$ and $H_{c}$, in order to
serve as structural invariants.

\begin{prop}\label{prop:Casimir_Conditions_Piezo}Consider the closed-loop
system which stems from the coupling of (\ref{eq:pH_non_differential_operator})
and (\ref{eq:pH_controller_finite}) via the interconnection (\ref{eq:PCIS_3_coupling}).
Then, the functionals (\ref{eq:Casimir_Functionals_Piezo}) are structural
invariants of the closed loop, iff the conditions\begin{subequations}\label{eq:Casimir_Conditions_Piezo}
\begin{align}
(J_{c}^{\lambda\beta_{c}}-R_{c}^{\lambda\beta_{c}}) & =0\\
\delta_{\alpha}\mathcal{C}^{\lambda}(\mathcal{J}^{\alpha\beta}-\mathcal{R}^{\alpha\beta})+G_{c,\xi}^{\lambda}K^{\xi\eta}\mathcal{G}_{\eta}^{\beta} & =0\label{eq:domain_cond_prop_Piezo}\\
\delta_{\alpha}\mathcal{C}^{\lambda}\mathcal{G}_{\xi}^{\alpha}K^{\xi\eta}G_{c,\eta}^{\alpha_{c}} & =0\label{eq:Casimir_Input_Obstacle}\\
(\dot{x}^{\alpha}\delta_{\alpha}^{\partial,1}\mathcal{C}^{\lambda}+\dot{x}_{1}^{\alpha}\delta_{\alpha}^{\partial,2}\mathcal{C}^{\lambda})|_{\partial\mathcal{B}} & =0\label{eq:Casimir_Cond_Piezo_Boundary}
\end{align}
\end{subequations}are fulfilled.\end{prop}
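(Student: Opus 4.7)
The plan is to compute $\dot{\mathscr{C}}^\lambda$ along closed-loop trajectories and demand that the result vanish independently of the choices of $\mathcal{H}$ and $H_c$, then read off the four conditions by isolating coefficients of the functionally independent quantities.

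First, I would differentiate (\ref{eq:Casimir_Functionals_Piezo}) with respect to time, obtaining
\[
\dot{\mathscr{C}}^\lambda = \dot{x}_c^\lambda + \int_\mathcal{B}\bigl(\partial_\alpha\mathcal{C}^\lambda\,\dot{x}^\alpha + \partial_\alpha^1\mathcal{C}^\lambda\,\dot{x}_1^\alpha + \partial_\alpha^{11}\mathcal{C}^\lambda\,\dot{x}_{11}^\alpha\bigr)\mathrm{d}z^1,
\]
where the Casimir density, like $\mathcal{H}$, is allowed to depend on jet variables up to second order. Applying integration by parts twice on the $\partial_\alpha^{11}\mathcal{C}^\lambda$ term and once on the $\partial_\alpha^1\mathcal{C}^\lambda$ term, the bulk integrand collapses to $\delta_\alpha\mathcal{C}^\lambda\,\dot{x}^\alpha$, while the accumulated boundary pieces assemble, via the operators in (\ref{eq:boundary_operators}), into $(\dot{x}^\alpha\delta_\alpha^{\partial,1}\mathcal{C}^\lambda + \dot{x}_1^\alpha\delta_\alpha^{\partial,2}\mathcal{C}^\lambda)|_{\partial\mathcal{B}}$.

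Next I would substitute the closed-loop dynamics. From (\ref{eq:pH_controller_finite}) together with $u_c^\xi = K^{\xi\eta}\int_\mathcal{B}y_\eta\mathrm{d}z^1$ and $y_\eta = \mathcal{G}_\eta^\beta\delta_\beta\mathcal{H}$,
\[
\dot{x}_c^\lambda = (J_c^{\lambda\beta_c}-R_c^{\lambda\beta_c})\partial_{\beta_c}H_c + G_{c,\xi}^\lambda K^{\xi\eta}\int_\mathcal{B}\mathcal{G}_\eta^\beta\delta_\beta\mathcal{H}\,\mathrm{d}z^1,
\]
while (\ref{eq:pH_non_differential_operator}) combined with $u^\xi=-K^{\xi\eta}G_{c,\eta}^{\alpha_c}\partial_{\alpha_c}H_c$ gives
\[
\dot{x}^\alpha = (\mathcal{J}^{\alpha\beta}-\mathcal{R}^{\alpha\beta})\delta_\beta\mathcal{H} - \mathcal{G}_\xi^\alpha K^{\xi\eta}G_{c,\eta}^{\alpha_c}\partial_{\alpha_c}H_c.
\]
Plugging these into the expression for $\dot{\mathscr{C}}^\lambda$ and absorbing the $z^1$-independent factor $G_{c,\xi}^\lambda K^{\xi\eta}$ into the spatial integral, I would regroup the result as a sum of four contributions: (i) the finite-dimensional term $(J_c^{\lambda\beta_c}-R_c^{\lambda\beta_c})\partial_{\beta_c}H_c$; (ii) an integral with integrand $[\delta_\alpha\mathcal{C}^\lambda(\mathcal{J}^{\alpha\beta}-\mathcal{R}^{\alpha\beta}) + G_{c,\xi}^\lambda K^{\xi\eta}\mathcal{G}_\eta^\beta]\delta_\beta\mathcal{H}$; (iii) an integral with integrand $-\delta_\alpha\mathcal{C}^\lambda\mathcal{G}_\xi^\alpha K^{\xi\eta}G_{c,\eta}^{\alpha_c}\partial_{\alpha_c}H_c$; and (iv) the boundary term from the first step.

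Since $\mathcal{H}$ and $H_c$ are arbitrary, the quantities $\delta_\beta\mathcal{H}$, $\partial_{\beta_c}H_c$ and the boundary values of $\dot{x}^\alpha$, $\dot{x}_1^\alpha$ can be varied independently, so each of the four contributions must vanish on its own. This yields (\ref{eq:Casimir_Conditions_Piezo}) line by line; the converse follows by reading the calculation backwards. The main obstacle is the integration-by-parts step, where the second-order jet dependence of $\mathcal{C}^\lambda$ must be tracked carefully so that all accumulated boundary contributions assemble into precisely the operators $\delta^{\partial,1}$ and $\delta^{\partial,2}$ of (\ref{eq:boundary_operators}); the remainder of the argument is essentially bookkeeping.
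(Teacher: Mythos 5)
Your proposal is correct and follows essentially the same route as the paper: compute $\dot{\mathscr{C}}^{\lambda}$ (the integration by parts you spell out is exactly what produces the paper's formula (\ref{eq:Casimir_Conditions_Piezo_C_p}) with the boundary operators (\ref{eq:boundary_operators})), substitute the closed-loop dynamics with the coupling (\ref{eq:PCIS_3_coupling}), and demand that each of the four independent contributions vanish. The only difference is that you make explicit the jet-variable bookkeeping that the paper leaves implicit.
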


\begin{proof}We begin by calculating the formal change of (\ref{eq:Casimir_Functionals_Piezo})
along trajectories of the closed loop, which follows to
\begin{equation}
\dot{\mathscr{C}}^{\lambda}=\dot{x}_{c}^{\lambda}+\int_{\mathcal{B}}\dot{x}^{\alpha}\delta_{\alpha}\mathcal{C}^{\text{\ensuremath{\lambda}}}\mathrm{d}z^{1}+(\dot{x}^{\alpha}\delta_{\alpha}^{\partial,1}\mathcal{C}^{\lambda}+\dot{x}_{1}^{\alpha}\delta_{\alpha}^{\partial,2}\mathcal{C}^{\lambda})|_{\partial\mathcal{B}},\label{eq:Casimir_Conditions_Piezo_C_p}
\end{equation}
and take into account the requirement $\dot{\mathscr{C}}^{\lambda}=0$
independently of $\mathcal{H}$ and $H_{c}$. If we substitute the
system equations of the controller and the plant (\ref{eq:pH_controller_finite})
and (\ref{eq:pH_non_differential_operator}), respectively, as well
as the relations (\ref{eq:PCIS_3_coupling}), then we obtain
\begin{multline*}
\dot{\mathscr{C}}^{\lambda}=(J_{c}^{\lambda\beta_{c}}-R_{c}^{\lambda\beta_{c}})\partial_{\beta_{c}}H_{c}+\int_{\mathcal{B}}(\delta_{\alpha}\mathcal{C}^{\lambda}(\mathcal{J}^{\alpha\beta}-\mathcal{R}^{\alpha\beta})+G_{c,\xi}^{\lambda}K^{\xi\eta}\mathcal{G}_{\eta}^{\beta})\delta_{\beta}\mathcal{H}\mathrm{d}z^{1}+\ldots\\
-\int_{\mathcal{B}}\delta_{\alpha}\mathcal{C}^{\lambda}\mathcal{G}_{\xi}^{\alpha}K^{\xi\eta}G_{c,\eta}^{\alpha_{c}}\partial_{\alpha_{c}}H_{c}\mathrm{d}z^{1}+(\dot{x}^{\alpha}\delta_{\alpha}^{\partial,1}\mathcal{C}^{\lambda}+\dot{x}_{1}^{\alpha}\delta_{\alpha}^{\partial,2}\mathcal{C}^{\lambda})|_{\partial\mathcal{B}}
\end{multline*}
and the proof follows immediately.\end{proof} Now, it is interesting
to interpret the results of Prop. \ref{prop:Casimir_Conditions_Piezo}
and draw some conclusions to the results and findings of \cite[Prop. 1]{Rams2017a}
for the boundary control of 1-dimensional pH-systems with 2nd-order
Hamiltonian. In particular, the condition (\ref{eq:domain_cond_prop_Piezo})
is of special interest as it allows to relate the plant within the
domain to $\lambda=1,\ldots,\bar{n}\leq n_{c}$ controller states,
which is not possible with the Casimir condition \cite[Eq. (17b)]{Rams2017a}.
Unfortunately, we are not able to relate every system state to the
plant, since the condition (\ref{eq:Casimir_Input_Obstacle}) describes
the fact that we cannot find any Casimir functions depending on variables
where the inputs of the plant appear in the corresponding system equations.
In \cite{Rams2017a}, the boundary of the infinite-dimensional system
is divided into an actuated and an unactuated part, where the actuated
boundary is used to relate the plant to the controller, see \cite[Eqs. (17c) and (17d)]{Rams2017a}.
In contrast, as we solely consider a coupling of the plant within
the spatial domain, the impact of the Casimir functionals at the boundary
must vanish, cf. (\ref{eq:Casimir_Cond_Piezo_Boundary}), like in
\cite[Eq. (17e)]{Rams2017a} at the unactuated boundary.

Having the preceding findings at hand, the piezo-actuated beam of
Ex. \ref{ex:Euler-Bernoulli_Beam_Piezo_actuated} serves us to demonstrate
the applicability of the proposed control scheme.

\begin{exmp}[Energy-Casimir controller for Ex. 2]\label{ex:Controler_Beam_Piezo}In
this example, we develop a finite-dimensional pH-controller for the
piezo-actuated Euler-Bernoulli beam of Ex. \ref{ex:Euler-Bernoulli_Beam_Piezo_actuated},
aiming at stabilising the equilibrium
\begin{equation}
w^{d}\!=\!\left\{ \begin{array}{cc}
0 & \mathrm{for}\:0\leq z^{1}<z_{p}^{1},\\
a(z^{1}-z_{p}^{1})^{2} & \mathrm{for}\:z_{p}^{1}\leq z^{1}<z_{p}^{1}+L_{p}\\
b(z^{1}-z_{p}^{1}-L_{p})+a(L_{p})^{2} & \mathrm{for}\:z_{p}^{1}+L_{p}\leq z_{p}^{1}\leq L.
\end{array},\right.\label{eq:equilibrium_piezo_beam}
\end{equation}
To this end, we relate one controller state to the plant, and two
controller states shall be used for the damping injection, i.e., the
dimension of the controller follows to $n_{c}=3$. By means of a proper
choice of the initial controller states, the Casimir function $\mathcal{C}^{1}=-g(z^{1})w$,
which satisfies the conditions (\ref{eq:Casimir_Conditions_Piezo})
if we set $G_{c}^{1}=1$ and $K=1$, yields the important relation
\[
x_{c}^{1}=\int_{\mathcal{B}}g(z^{1})w\mathrm{d}z^{1}.
\]
 It should be noted that $x_{c}^{1}$ corresponds to the weighted,
integrated deflection of the beam, whereas in \cite{Rams2017a} the
deflection and angle at the actuated boundary are used as controller
states. Furthermore, the controller dynamics are constrained to the
maps
\[
J_{c}-R_{c}\!=\!\left[\begin{array}{ccc}
0 & 0 & 0\\
0 & -R_{c}^{22} & J_{c}^{23}-R_{c}^{23}\\
0 & -J_{c}^{23}-R_{c}^{23} & -R_{c}^{33}
\end{array}\right],\:G_{c}\!=\!\left[\begin{array}{c}
1\\
G_{c}^{2}\\
G_{c}^{3}
\end{array}\right]
\]
due to the conditions (\ref{eq:Casimir_Conditions_Piezo}). To be
able to stabilise the equilibrium (\ref{eq:equilibrium_piezo_beam}),
it must become a part of the minimum of the closed-loop Hamiltonian
$\mathscr{H}_{cl}=\mathscr{H}+H_{c}$. For this purpose, we set the
controller Hamiltonian to
\[
H_{c}=\tfrac{c_{1}}{2}(x_{c}^{1}-x_{c}^{1,d}-\tfrac{u_{s}}{c_{1}})^{2}+\tfrac{1}{2}M_{c,\mu_{c}\nu_{c}}x_{c}^{\mu_{c}}x_{c}^{\nu_{c}},
\]
together with the relation $x_{c}^{1,d}=\int_{\mathcal{B}}g(z^{1})w^{d}\mathrm{d}z^{1}$,
the positive definite matrix $[M_{c}]$, $M_{c,\mu_{c}\nu_{c}}\in\mathbb{R}$
for $\mu_{c},\nu_{c}=2,3$, and the positive constant $c_{1}>0$.
Note that the term with $u_{s}$ was incorporated in $H_{c}$ as (\ref{eq:equilibrium_piezo_beam})
is an equilibrium that requires non-zero power, i.e. a stationary
voltage $u_{s}$ inducing a static holding torque. If we consider
the power-conserving interconnection structure $u_{c}=\int_{\mathcal{B}}g(z^{1})\dot{w}\mathrm{d}z^{1}$
and $u=-y_{c}$, the formal change of $\mathscr{H}_{cl}$ follows
to
\begin{equation}
\dot{\mathscr{H}}_{cl}=-x_{c}^{\mu_{c}}M_{c,\mu_{c}\nu_{c}}R_{c}^{\nu_{c}\rho_{c}}M_{c,\rho_{c}\vartheta_{c}}x_{c}^{\vartheta_{c}}\leq0,\label{eq:H_cl_p_Piezo}
\end{equation}
with $\rho_{c},\vartheta_{c}=2,3$. Since in this contribution no
detailed stability investigations are carried out, cf. Rem. \ref{rem:stability},
we are content with the fact that $\dot{\mathscr{H}}_{cl}\leq0$ implies
that $\mathscr{H}_{cl}$ is non-increasing along closed-loop solutions.
Furthermore, Fig. \ref{fig:sim_result} shows that the proposed controller
stabilises the desired equilibrium (\ref{eq:equilibrium_piezo_beam})
with $a=0.3587$ and $b=0.1436$. Here, all system parameters of the
beam are set to 1 and the piezoelectric patches with the length $L_{p}=0.2$
are placed at $z_{p}^{1}=0.2$. The remaining degrees of freedom for
the pH-controller are chosen according to $J_{c}^{23}=1$, $R_{c}^{22}=3$,
$R_{c}^{23}=-1$, $R_{c}^{33}=1.5$, $M_{c,22}=85$, $M_{c,23}=0$,
$M_{c,33}=60$, $G_{c}^{2}=G_{c}^{3}=1.7$ and $c_{1}=0.1$.
\begin{figure}
\center\input{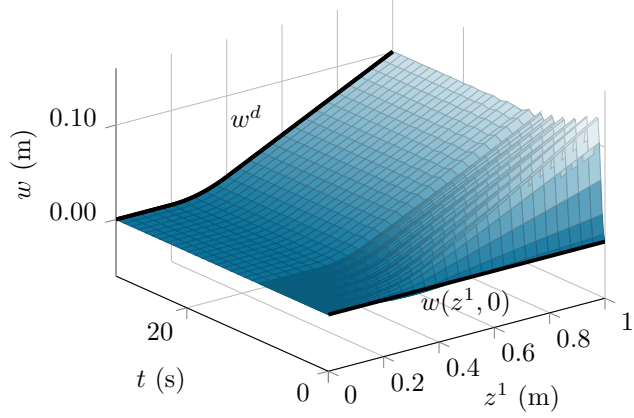}\caption{\label{fig:sim_result}Simulation results for the beam deflection
$w$ against time $t$ and spatial domain $z^{1}$.}
\end{figure}
\end{exmp}

\section{Conclusion and Outlook}

In this article, a control strategy for infinite-dimensional pH-systems
with in-domain actuation yielding a dynamic controller has been developed.
Here, we focused on systems with lumped inputs that act distributed
over a part of the spatial domain and demonstrated the applicability
of the control approach by means of a piezo-actuated Euler-Bernoulli
beam. Since the distributed output density of the system under investigation
cannot be measured, for the implementation of the controller an observer
would be required, which is part of future research. Furthermore,
we aim to extend the proposed control strategy to infinite-dimensional
systems with distributed inputs requiring an infinite-dimensional
controller.

\bibliographystyle{ieeetr}
\bibliography{my_bib}

\end{document}